\newtheorem{theorem}{\bf Theorem}[section]
\newtheorem{proposition}[theorem]{\bf Proposition}
\newtheorem{conjecture}[theorem]{\bf Conjecture}
\newcommand{\eopf}{\framebox[3mm]}
\newenvironment{proof}%X1
{\noindent{\bf Proof.}\ }%
{ \hfill\eopf\par\bigskip}%
\begin{document}
\baselineskip16pt

\centerline{{\Large { On a problem concerning integer distance graphs}}}

\bigskip

\bigskip

\centerline{Janka Oravcov\'a}
\smallskip
\centerline{{\it Department of Applied Mathematics}}
\centerline{{\it Faculty of Economics, Technical University}}
\centerline{{\it B. N\v emcovej~32, 040 01 Ko\v{s}ice, Slovak Republic}}
\centerline{\scriptsize{e-mail:{\it oravcova.j@gmail.com}}}

\begin{center}
and
\end{center}

\centerline{Roman Sot\'ak\footnote{This work was supported by the Slovak Science and Technology Assistance Agency under the contract no. APVV-0007-07.}}
\smallskip
\centerline{\it Institute of Mathematics,}
\centerline{\it Faculty of Science, P.J. \v Saf\'arik University}
\centerline{\it Jesenn\'a 5, 041 54 Ko\v sice, Slovak Republic}
\centerline{\scriptsize{e-mail:\textit{roman.sotak@upjs.sk}}}

\begin{abstract}
For $D$ being a subset of positive integers, the integer distance graph is the graph $G(D)$, whose vertex set is the set of integers, and edge set is the set of all pairs $uv$ with $|u-v| \in D$. It is known that $\chi(G(D)) \leq |D|+1$.

This article studies the problem (which is motivated by a conjecture of Zhu): "Is it true that $\chi(G(D)) = |D|+1$ implies $\omega(G(D)) \geq |D|+1$, where $\omega(H)$ is the clique number of $H$?".
We give a negative answer to this question, by showing an infinite class of integer distance graphs with $\chi(G(D))=|D|+1$ but $\omega(G(D))=|D|-1$.
%
%We give a negative answer to this question, by showing an infinite class of integer distance graphs, which are counterexamples to this conjecture.

\smallskip

\noindent{\bf Keywords:} integer distance graph, chromatic number, clique number.

\smallskip

\noindent{\bf 1991 Mathematics Subject Classification:} 05C15, 05C63, 05C75.

\smallskip
\end{abstract}

\section{Introduction}
Let $D$ be a subset of the set of positive integers $\mathbb N$. The {\em integer distance graph} $G({\mathbb Z},D) = G(D)$ is defined as the graph with vertex set $V(G(D)) = {\mathbb Z}$ the set of integers, and the edge set containing all pairs $uv$ whose absolute difference $|u-v|$ falls in the set $D$. We call $D$ the {\em distance set}.

A \textit{coloring} $f:\ V(G) \to \{f_1,f_2,\dots ,f_k\}$ of $G$ is an assignment of colors to the vertices of $G$ such that $f(u) \ne f(v)$ for all adjacent vertices $u$ and $v$. The minimum number of colors required to color the vertices of $G$ is the {\em chromatic number} $\chi(G)$ of $G$. If such a minimum does not exist we write $\chi(G) = \infty$.

For a distance set $D = \{d_1,d_2,\dots, d_k \} \subseteq \mathbb N$, we write $G(D) = G(d_1,d_2,\dots, d_k)$ and $\chi(G(D)) = \chi(D) = \chi(d_1,d_2,\dots, d_k)$.

Integer distance graphs were introduced by Eggleton, Erd\H os and Skilton \cite{EES}. They introduced it as a variation of the well--known plane coloring problem: what is the least number of colors required to color the points of Euclidean plane so that points of unit distance are colored differently?

In \cite{H}, Hadwiger constructed a tiling of the plane in seven sets of congruent hexagons such that no set contains two points of distance 1. On the other hand, there exist 4-chromatic unit distance graphs in the plane (see \cite{MM}). Therefore, we have $4 \leq \chi(G({\mathbb R}^2,\{1\})) \leq 7$. Until now, no substantial progress on this problem has been made. The chromatic number of special distance graphs has been determined in several papers (see for example. \cite{KM3,LZ}).

If $d$ is an arbitrary divisor of the elements $d_1,d_2,\dots,d_k$ of the distance set $D$, then the integer distance graph $G(D) = G(d_1,d_2,\dots,d_k)$ is isomorphic to $d$ disjoint copies of $G(\frac{d_1}{d},\frac{d_2}{d},\dots,\frac{d_k}{d})$. Hence we will restrict ourselves throughout this paper to integer distance graphs $G(D)$ such that the greatest common divisor ($gcd$ in the sequel) of the distance elements is 1.

General bounds for the chromatic number (if $D$ is nonempty) are $2 \leq \chi(D) \leq |D|+1$. The lower bound is attained for $gcd(D)=1$ if and only if all elements of $D$ are odd, proof for the upper bound can be found in \cite{W}. For example, upper bound is attained if $D=\{1,2,\dots ,n\}$, since $K_{n+1}$ is a subgraph of $G(D)$.

If $|D|=3,\ D=\{x,y,z\}$ and $gcd(D)=1$, then $\chi(D) = 4$ if and only if $D=\{1,2,3n\}$ or $D=\{x,y,x+y\}$ with $x \not \equiv y\ (\mbox{mod }3)$ (see e.g. \cite{Vo,Z1}). If $x,y,z$ are odd then $\chi(D)=2$. For all other 3-element distance sets $D$, $\chi(D) = 3$ (see \cite{Z1}).

For $|D| \geq 4$, the complete characterization of distance graphs with respect to chromatic number is not known. It seems to be an interesting question to determine all integer distance graphs whose chromatic number attains the maximum value $|D|+1$.

If $|D|=4$, then $\chi(D) = 5$ if $D=\{1,2,3,4n\}$ or $D=\{x,y,x+y,|y-x|\}$ with $x \equiv y \equiv 1\ (\mbox{mod }2)$ \cite{KM1}. It is not known whether there are other 4-element distance sets $D$ such that $\chi(D) = 5$.

It is conjectured by Zhu that if $G(D)$, $|D|\geq3$, is triangle-free, then $\chi(D)\leq|D|$ (see Conjecture 4.1 in \cite{Z1}). Liu and Zhu cited this conjecture, but their formulation is stronger (see Conjecture 5.3 in \cite{LZ2}). In this article, we study the following question which is related to these conjectures.

\bigskip

\noindent \textbf{Question A}
\textit{Whenever $\chi(D)$ attains the maximum value, then clique number $\omega(G(D))$ is at least the cardinality of $D$, in other words, $\chi(D) = |D|+1$ implies $\omega(G(D)) \geq |D|$?}

\bigskip

Kemnitz and Marangio \cite{KM2} proved that $\omega(G(D)) \geq |D|$ if and only if $|D| \leq 1$, $D = \{x,y,x+y\}$, $D=\{x,y,x+y,|y-x|\}$ or $D=\{x,2x,\dots,nx,y\}$, where $x \neq y$. If the answer to Question A is positive for all sets $D$, then the result of Kemnitz and Marangio would imply that there does not exist any other sets $D$ with $\chi(D) = |D| + 1$.

We show that the answer to Question A is not always positive by presenting infinite class of integer distance graphs $G(D)$ with $\omega(G(D)) < |D|$ and $\chi(D) = |D|+1$.

\section{Main results}

\begin{theorem}
For $D=\{1,4,5,6,7\},\ \chi(D) = 6,\ \omega(G(D)) = 4.$
\end{theorem}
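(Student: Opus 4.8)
I need to show three things for $D = \{1,4,5,6,7\}$: the upper bound $\chi(D) \le 6$ (automatic from $|D|+1$), the lower bound $\chi(D) \ge 6$, and the clique number $\omega(G(D)) = 4$.

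Let me think about the clique number first. A clique in $G(D)$ is a set of integers, pairwise differences all in $D = \{1,4,5,6,7\}$. WLOG normalize so the smallest is 0. So I need $S = \{0 = a_0 < a_1 < \dots < a_{m-1}\}$ with all pairwise differences in $\{1,4,5,6,7\}$. The largest difference is $a_{m-1} \le 7$. Let me search.

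Size 4: need $\{0, a, b, c\}$ with $c \le 7$, differences $a, b, c, b-a, c-a, c-b$ all in $D$. Try $c = 7$: then $a, b, 7-a, 7-b, b-a, 7 \in D$. If $a = 1$: $b, 6, 7-b, b-1 \in D$. $b-1 \in D$ so $b \in \{2,5,6,7,8\}$, and $b < 7$ and $b \ne a$, and $b \in D$, so $b \in \{5,6\}$. $b=5$: check $7-b = 2 \notin D$. $b = 6$: $7 - b = 1 \in D$, $b - 1 = 5 \in D$. So $\{0,1,6,7\}$ — differences $1,6,7,5,6,1$, all in $D$. Great, $\omega \ge 4$.

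Size 5: $\{0, a_1, a_2, a_3, a_4\}$, $a_4 \le 7$. The differences $a_4 - a_3, a_3 - a_2, a_2 - a_1, a_1$ are four "gaps" summing to $a_4 \le 7$, each gap $\ge 1$, each gap in $D$ so $\ge 1$; sum of 4 gaps each $\ge 1$ is $\ge 4$, and if $a_4 = 7$ the gaps are a composition of $7$ into $4$ parts from $D$; only option up to order: $1+1+1+4$, $1+1+4+1$, etc. — but two consecutive $1$'s give a difference $2 \notin D$. So no two gaps can both be... actually any two adjacent gaps summing to something must be in $D$: $1+1 = 2 \notin D$. And $a_4 \le 7$ forces at least three gaps equal to $1$ (since $4 \cdot 1 = 4$, and to reach anything we need small gaps), leading to adjacent $1$'s. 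More carefully I would enumerate: the partition of $a_4 \le 7$ into 4 parts from $\{1,4,5,6,7\}$ — the parts are at most $7-3 = 4$, so each part in $\{1,4\}$. Partitions: $1+1+1+4 = 7$ or $1+1+1+1=4$. Both contain $\ge 3$ ones, hence two adjacent ones. So no $K_5$, giving $\omega(G(D)) = 4$.

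**The lower bound $\chi(D) \ge 6$** is the real obstacle. I want to show $G(D)$ is not $5$-colorable. The standard approach: assume a proper $5$-coloring $f: \mathbb{Z} \to \{0,1,2,3,4\}$ exists, and derive a contradiction by examining local patterns. The key structural fact: vertices $i$ and $i+1$ differ (distance $1 \in D$), and the block $\{i, i+1, \dots, i+7\}$ — note that within any window of $8$ consecutive integers, the graph induced is fairly dense. Actually, I'd look at which pairs in $\{0,1,\dots,7\}$ are non-edges: differences $2$ and $3$. So non-edges are exactly $\{i, i+2\}$ and $\{i, i+3\}$ within a window. A proper coloring restricted to $\{0,\dots,7\}$: color classes must be independent sets, i.e., subsets with all pairwise differences in $\{2,3\}$ — possible independent sets of size $\ge 2$: $\{i,i+2\}$, $\{i,i+3\}$, $\{i,i+2,i+5\}$ (diffs $2,3,5$ — no, $5 \in D$!), so actually $\{i, i+3\}$ and $\{i, i+2\}$ and that's basically it for pairs, and no independent triples since $2+2=4 \in D$, $2+3 = 5 \in D$, $3+3 = 6 \in D$. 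So every color class meets $\{0,\dots,7\}$ (eight vertices) in at most $2$ elements — but $8 > 5 \cdot ... $ wait $5$ colors $\times$ $2$ = $10 \ge 8$, not yet a contradiction. I'd need to push to a longer window or use a periodicity/counting argument: in any window of $n$ consecutive integers each color appears at most $\lceil$ something $\rceil$, forcing density $> 1/5$... Let me reconsider: each color class is a union of independent sets; globally an independent set in $G(D)$ has density at most — independent sets are sets avoiding differences $1,4,5,6,7$, i.e., allowing only differences that are multiples... differences in $\{2,3,8,9,10,\dots\}$, actually any difference $\ge 8$ is fine plus $2,3$. Max density independent set: can I do better than density $1/4$? $\{0,2\} + 8\mathbb{Z}$? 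That's two per $8$, density $1/4$. Try $\{0,2,8,10,\dots\}$? diff between $2$ and $8$ is $6 \in D$. Bad. $\{0,3,8,11,...\}$: $3$ to $8$ is $5\in D$. So within one period of $8$ at most the pattern $\{0,2\}$ or $\{0,3\}$ — density $1/4$. So $5$ color classes cover density $\le 5/4 \ge 1$ — still no contradiction from crude counting. So I genuinely need a finite-case / computer-style or clever combinatorial argument on a bounded window.

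**The plan** is therefore: (1) dispatch $\omega = 4$ by the short enumeration above; (2) for $\chi \ge 6$, suppose a proper $5$-coloring exists, fix the colors $f(0), f(1)$ (they're distinct), and propagate constraints along $0,1,2,\dots$, showing the list of allowed colors at each new vertex (forbidden: colors of $i-1, i-4, i-5, i-6, i-7$) eventually empties — equivalently, show that the "transfer" on $7$-tuples (or $8$-tuples) of colors of consecutive vertices has no bi-infinite orbit. Concretely, I would define a state as the coloring of a window of $7$ consecutive vertices (the ones that constrain the next vertex via distances $1,4,5,6,7$ — note distances $2,3$ are absent so the window $\{i-7,\dots,i-1\}$ minus positions $i-2,i-3$ actually I only need positions $i-1,i-4,i-5,i-6,i-7$, five positions, but they overlap window-to-window so I track $7$), build the finite directed graph of legal transitions on these states (up to the $S_5$ symmetry of color permutation and reflection, keeping it small), and check it is a DAG — i.e., has no directed cycle. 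The main obstacle is making this finite check clean enough to present without it degenerating into a brute-force case list; I expect the author exploits that $\{0,1,6,7\}$ forming a $K_4$ forces, near any such configuration, very rigid color choices, and that trying to extend periodically collides with the distances $4,5$. I would present it as: assume a proper $5$-coloring, use the $K_4$ on $\{i,i+1,i+6,i+7\}$ to pin down that $f(i+2),f(i+3),f(i+4),f(i+5)$ must reuse exactly the colors $\{f(i),f(i+1),f(i+6),f(i+7)\}$ plus the fifth, then track how the fifth color is forced to migrate and show two vertices at distance in $D$ get the same color within a bounded range, contradiction; failing a slick version, fall back to the explicit finite transition-graph verification.
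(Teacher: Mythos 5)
Your treatment of the clique number is correct and in fact more self-contained than the paper's: you exhibit a $K_4$ (on $\{0,1,6,7\}$; the paper uses $\{0,1,5,6\}$) and then rule out a $K_5$ by a direct gap argument -- four gaps, each in $D$, summing to at most $7$ force every gap into $\{1,4\}$, hence at least three gaps equal to $1$, hence two adjacent gaps equal to $1$ and a forbidden difference of $2$. The paper instead cites the characterization of Kemnitz and Marangio for the upper bound, so your elementary enumeration is a genuine (small) improvement in self-containment.

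The gap is in the main claim, $\chi(D)\geq 6$. You correctly diagnose that crude density or counting arguments cannot work (each color class has density exactly up to $1/4$, and each window of $8$ consecutive integers meets each class in at most $2$ vertices, giving only $\chi \geq 4$), but you then stop at a plan: ``build the finite directed graph of legal transitions on these states \dots and check it is a DAG,'' with an explicit fallback to ``the explicit finite transition-graph verification.'' That verification is never performed, and no concrete contradiction is ever derived from the assumption of a $5$-coloring; the proposed slick version (tracking how the fifth color ``migrates'' around a $K_4$ window) is also left entirely unexecuted. This is the heart of the theorem, so as written the proof is incomplete. For comparison, the paper's argument is a short hand-checkable case analysis: take the longest run of consecutive integers receiving pairwise distinct colors, observe its length is $3$, $4$, or $5$ (length $\leq 2$ would force period $2$ and clash with the distance $4$), and in each case propagate forced colors over a window of roughly a dozen vertices until some vertex has all five colors among its neighbours. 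Your transfer-matrix idea would work in principle, but to count as a proof you must either exhibit the state graph and verify acyclicity, or carry out a bounded propagation argument of the kind the paper gives.
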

\begin{proof}
The graph $G(D)$ contains $K_4$ as the subgraphs induced on the vertices $\{0,1,5,6\}$, therefore $\omega(G(D)) \geq 4$. On the other hand, by result of Kemnitz and Marangio \cite{KM2} $\omega(G(D)) < |D| = 5$ because $|D| \neq 1,\ D \neq \{x,y,x+y\},\ D \neq \{x,y,x+y,|y-x|\}$ and $D \neq \{x,2x,\dots ,nx, y\}$. Thus $\omega(G(D)) = 4$.

In the next, we will show that there is no coloring of $G(D)$ with 5 colors.
Assume that there exists a coloring $f:\ V(G(D)) \to \{a,b,c,d,e\}$. Consider the longest sequence of consecutive integers such that the colors assigned to them are all different. Since 5 colors are available for coloring, this sequence has length at most 5. On the~other hand, it has length at least 3, otherwise $f$ would be a constant coloring, or two colors would alternate, leading to the same color of two vertices in distance 4.

First, suppose that the sequence above has length 5. Without loss of generality, let $f(0) = a,\ f(1) = b,\ f(2) = c,\ f(3) = d,\ f(4) = e$. Then $f(7) = e$ (vertices 0,1,2,3 are neighbours of the vertex 7), and, similarly, $f(6) = d,\ f(5) = c,\ f(8) = a,\ f(9)=b,\ f(10)=a, f(11)=b$. But then it is not possible to color the vertex 12, because five of its neighbours are colored with all five colors.

Next suppose that the sequence above has length 4. Again, we can assume $f(0) = a,\ f(1) = b,\ f(2) = c,\ f(3) = d$; moreover, $f(1) \neq e,\ f(-1) \neq e$. Then $f(4) \in \{b,c\},\ f(-1) \in \{b,c\}$ and $f(4) \neq f(-1)$, hence, $f(4) \cup f(1) = \{b,c\}$. We have then $f(-4) = f(7) = e$, because $-4$ and $7$ are neighbours of the vertices $0,1,2,3$.  Moreover, $f(6) = d,\ f(-3) = a,\ f(5) = e$. But then no color can be assigned to the vertex $-2$, since all available colors were used to color its neighbours $-3,-1,2,3,4,5$.

Finally, suppose that this sequence has length 3. Let $f(0) = a,\ f(1)=b,\ f(2) = c$ and $f(3) \not \in \{d,e\},\ f(1) \not \in \{d,e\}$. We consider the following possibilities:
\begin{description}
\item[1)] $f(-1) = b,\ f(3) = a,\ f(4) \not \in \{d,e\}$ (otherwise there is multicolored sequence of length 4), hence, $f(4) = c$,
\item[2)] $f(-1) = c,\ f(3) = b,\ f(-2) \not \in \{d,e\}$, hence, $f(-2) = a$,
\item[3)] $f(-1) = c,\ f(3) = a,\ f(4) \not \in \{d,e\},\ f(4) = b,\ f(-2) \not \in \{d,e\}$. But then the vertex $-2$ cannot be colored.
\end{description}

The possibilities 1) and 2) are equivalent, because they generate the sequence $x y x z y z$, where $x,y,z$ are three different colors; thus, we can suppose that 1) holds. Then $f(3) \in \{d,e\}$ and (since none of those two colors was used yet) we can put $f(5) = d$. But then $f(6) = e$ gives the multicolored sequence of length 4, a contradiction.
\end{proof}

Next we will show that the distance set of this theorem is not the only one that gives a negative answer to Question A.

\begin{theorem}
For $D = \{1,2,3, \dots, 2k-1,2k+1,4k\},\ \chi(D) = |D|+1$ and $\omega(G(D)) = |D|-1$.
\end{theorem}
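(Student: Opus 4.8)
The plan splits into the (easy) clique number and the (main) chromatic number; throughout we may assume $k\ge 2$ (for $k=1$ the set $\{1,3,4\}$ is of type $\{x,y,x+y\}$, so the statement is understood for $k\ge 2$).

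For $\omega(G(D))=|D|-1=2k$: the integers $0,1,\dots,2k-1$ have pairwise differences in $\{1,\dots,2k-1\}\subseteq D$, so they induce $K_{2k}$ and $\omega(G(D))\ge 2k$. For the reverse inequality I would invoke the Kemnitz--Marangio characterization quoted above: for $k\ge 2$ the set $D=\{1,2,\dots,2k-1,2k+1,4k\}$ is none of $\{x\}$, $\{x,y,x+y\}$, $\{x,y,x+y,|y-x|\}$, $\{x,2x,\dots,nx,y\}$ --- the first three fail on cardinality, and the last fails because $1,2,3\in D$ force the common factor $x=1$, yet deleting one element of $D$ never leaves an initial segment $\{1,\dots,n\}$. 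Hence $\omega(G(D))<|D|$, so $\omega(G(D))=2k$.

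For the chromatic number, $\chi(D)\le|D|+1$ is the general bound, so it suffices to show $G(D)$ has no proper $(2k+1)$-colouring. Assume $f:\mathbb Z\to\{1,\dots,2k+1\}$ is one (a colouring with at most $2k$ colours is impossible: a $K_{2k}$ subgraph forces every block of $2k$ consecutive integers to use all colours, hence $f$ is $2k$-periodic, contradicting $4k\in D$; so all $2k+1$ colours occur). Since $\{1,\dots,2k-1\}\subseteq D$, each block $B_i=\{i,\dots,i+2k-1\}$ induces $K_{2k}$ and so misses exactly one colour $m(i)$. As $f(i+2k)$ must avoid the colours of $i+1,\dots,i+2k-1$ (distances $1,\dots,2k-1\in D$), which are precisely all colours but $f(i)$ and $m(i)$, we get a dichotomy: either $f(i+2k)=f(i)$, and then $B_{i+1}=B_i$ so $m(i+1)=m(i)$ (call $i$ \emph{stable}); or $f(i+2k)=m(i)\ne f(i)$, and then $m(i+1)=f(i)$ (call $i$ a \emph{switch}). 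Let $Q$ be the set of switches.

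Two properties of $Q$ finish the proof. First, from $2k+1\in D$: $Q$ has no two consecutive elements --- if $i,i+1\in Q$ then $f(i+2k+1)=m(i+1)=f(i)$, a monochromatic pair at distance $2k+1$. Second, from $4k\in D$: every $i$ satisfies $i\in Q$ or $i+2k\in Q$ --- otherwise $f(i)=f(i+2k)=f(i+4k)$, a monochromatic pair at distance $4k$; equivalently $Q\cup(Q-2k)=\mathbb Z$. The second property makes $Q$ unbounded in both directions, and a gap of size $\ge 3$ between consecutive elements of $Q$ would yield, via the second property, two consecutive integers of $Q$, contradicting the first; hence consecutive elements of $Q$ differ by exactly $2$, i.e. $Q$ is a single residue class modulo $2$. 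But $2k$ is even, so $Q-2k=Q$, and then $Q\cup(Q-2k)=Q\ne\mathbb Z$, contradicting the second property. Thus $\chi(D)\ge 2k+2$, so $\chi(D)=|D|+1$.

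The routine parts are the two block-comparison computations giving the dichotomy and the value of $m(i+1)$, and the check that $D$ avoids the Kemnitz--Marangio families. The load-bearing idea --- the step I expect to be the crux --- is isolating exactly those two clean properties of the switch set $Q$ and noticing that "$Q$ has all gaps $\ge 2$'' together with "$Q$ and its shift by the \emph{even} number $2k$ cover $\mathbb Z$'' is self-contradictory.
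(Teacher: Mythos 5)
Your proof is correct, and the chromatic-number part takes a genuinely different route from the paper's. Both arguments start from the same observation: since $\{1,\dots,2k-1\}\subseteq D$, every block of $2k$ consecutive integers is rainbow and misses exactly one colour, and the colour of $i+2k$ is forced into $\{f(i),m(i)\}$. From there the paper works concretely: it proves that some block of length $2k+1$ is rainbow and some is not, that every block of length $2k+2$ uses all $2k+1$ colours, and then runs an induction that pins down the entire colour pattern on $\{1,\dots,4k\}$ ($f(2k+2i+1)=f(2i+1)$, $f(2k+2i+2)=f(2i)$), finally exhibiting the single vertex $4k+1$ whose neighbourhood exhausts all colours. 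You instead package the same dichotomy into the switch set $Q$, extract exactly two clean facts --- $2k+1\in D$ forbids consecutive switches, $4k\in D$ forces $Q\cup(Q-2k)=\mathbb Z$ --- and close with a parity argument: all gaps in $Q$ must equal $2$, so $Q$ is a residue class modulo $2$, which is invariant under the even shift $2k$ and hence cannot cover $\mathbb Z$ together with its shift. Your version is shorter, avoids the bookkeeping of the explicit induction (where the paper's write-up in fact has several typos in the colour lists), and makes transparent which element of $D$ is responsible for which constraint; the paper's version has the minor virtue of explicitly displaying the forced near-colouring, which shows how close $G(D)$ comes to being $(2k+1)$-colourable. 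Your handling of the clique number via Kemnitz--Marangio matches the paper's, and your explicit restriction to $k\ge 2$ (needed, since for $k=1$ the set $\{1,3,4\}$ is of type $\{x,y,x+y\}$ and has $\omega=3$) is a point the paper leaves implicit.
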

\begin{proof}
Since $|D| = 2k+1$ and $G(D)$ contains the induced subgraph $K_{2k}$ with vertex set $\{0,1,\dots, 2k-1 \}$, $\omega(G(D)) \geq 2k = |D| -1$. But $D$ is none of the types given by result of Kemnitz and Marangio \cite{KM2}, so $\omega(G(D)) < |D|$. Therefore, $\omega(G(D)) = |D| - 1$.

In the next, assume that there exists a coloring $f:\ V(G(D)) \to \{f_1,\dots , f_{2k+1}\}$. By \textit{sequence of length} $t$ we shall mean $t$ consecutive integers.

\begin{proposition}\label{p3}
Each sequence of length $2k$ is assigned with $2k$ distinct colors.
\end{proposition}
This is true because each such sequence induces $K_{2k}$.
\begin{proposition}\label{p4}
There exists a sequence of length $2k+1$ assigned with $2k+1$ distinct colors.
\end{proposition}
In the opposite case, the coloring $f$ is periodic with period $2k$ and $f(0) = f(2k) = f(4k)$, a contradiction with the fact that $\{0,4k\} \in E(G(D))$.

\begin{proposition}\label{p5}
There exists a sequence of length $2k+1$ assigned with $2k$ distinct colors.
\end{proposition}
This can be seen from the following: for the sequence of length $2k+1$ being assigned with all $2k+1$ colors (without loss of generality, let it be the sequence $1,2,\dots,2k+1$ and $f(i) = f_i$ for $i=1,2,\dots, 2k+1$) we obtain $f(0) = f_{2k}$ (the vertex 0 is adjacent with all the vertices $1,2,\dots,2k-1,2k+1$), hence, the sequence $0,1,\dots, 2k$ is assigned with $2k$ colors.

\begin{proposition}\label{p6}
Each sequence of length $2k+2$ is assigned with all $2k+1$ colors.
\end{proposition}
Proof is by contradiction. Let $f(1) = f_1,\ \dots, f(2k-1) = f_{2k-1},\ f(2k) = f_{2k}$. Since only $2k$ colors can be used for coloring vertices $2k+1$ and $2k+2$, it follows $f(2k+1) = f_1,\ f(2k+2)=f_2$. Consider the color of the vertex $4k+2$. With regard to its neighbourhood, we have $f(4k+2) \neq f(2),\ f(4k+2) \neq f(2k+1)$ and $f(4k+2) \not \in \{f(2k+3,f(2k+4),\dots,f(4k+1)\}$. This set is the set of colors assigned to $2k-1$ consecutive vertices, which have to be assigned with $2k-1$ distinct colors. Moreover, vertices $2k+3,\dots, 4k$ are adjacent to the vertex $2k+1$ (which is assigned with $f_1$) and the vertex $4k+1$ is adjacent with the vertex 1 (which is also assigned with $f_1$); this means that their colors are different from $f_1$. Next, vertices $2k+3,\dots ,4k+1$ are adjacent to the vertex $2k+2$ which is of the color $f_2$, so, their colors are different from $f_2$. Then $f_1,\ f_2$ and the next $2k-1$ colors cannot be assigned to the vertex $4k+2$, a contradiction.

According to the Proposition \ref{p3}, let $f(1) = f_1,\ dots, f(2k) = 2k$. Then, by Proposition \ref{p5}, we can assume $f(2k+1) = f_1$ and, by Proposition \ref{p6}, $f(2k+2) = f_{2k+1}$. We will show that $f(2k+2i+1) = f(2i+1)$ and $f(2k+2i+2) = f(2i)$, for $i=1,2,\dots, k-1$.

We proceed by induction on $i$. For $i=1$, it follows that $f(2k+3)$ is necessarily assigned with $f_3 = f(3)$ and, by Proposition \ref{p6} (for the sequence $3,4,\dots,2k+4$) $f(2k+4)$ is necessarily assigned with $f_2 = f(2)$. Hence suppose that the neighbours $2i,\ 2i+2,\dots ,2k,\ 2k+1,\ 2k+2,\dots,\ 2k+2i$ of the vertex $2k+2i+1$ are colored. For coloring of these neighbours, the colors $f_{2i},\ f_{2i+2},\dots,\ f_{2k}$, $f_1$, $f_{2k+1}$, $f_3$, $f_2$, $f_5$,  $f_4,\dots,f_{2i-1},\ f_{2i-1}$ were used, that is, all except $f_{2i+1} = f(2i+1)$. Hence, $f(2k+2i+1) = f(2i+1)$. For the vertex $2k+2i+2$, the neighbours $2i+1,\ 2i+3,\dots ,2k,\ 2k+1,\ 2k+2,\dots,2k+2i+1$ were colored with colors $f_{2i+1},\ f_{2i+3},\dots, f_{2k},\ f_1,\ f_{2k+1},\ f_3,\ f_2,\dots ,f_{2i-1},\ f_{2i-2},\ f_{2i+1}$, hence, by Proposition \ref{p6}, $f_{2k+2i+2}=f_{2i}$.

Finally, consider the color of the vertex $4k+1$. Due to its neighbourhood, $f(4k+1)\not\in\{f(1),f(2k),f(2k+2),\dots, f(4k)\} =$ $\{f_1,f_{2k},f_{2k+1}$, $f_3$, $f_2$, $f_5,f_4,\dots$, $f_{2k-1}$, $f_{2k-2}\}$. But then no color is available for the vertex $4k+1$. This finishes the proof.
\end{proof}

We believe that the answer to Question A is positive in all cases except of the ones described in Theorems 1 and 2. For supporting this, all distance sets $D$ with $4 \leq |D| \leq 11$ and $\max D \leq b_{|D|}$ were checked to fulfil it (we used $b_4 = 2000,\ b_5 = 800,\ b_6 = 300,\ b_7 = 100$, $b_8=120$, $b_9=90$, $b_{10}=75$, $b_{11}=40$).

\begin{conjecture}\label{nasa}
Let $gcd(D) =1$ and $\chi(D) = |D|+1$. Then $D=\{1,4,5,6,7\}$, $D=\{1,2,3,\dots, 2k-1,2k+1,4k\}$ or $\omega(G(D)) \geq |D|$.
\end{conjecture}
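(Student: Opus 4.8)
The route I would take is to recast Conjecture~\ref{nasa} as the missing half of a complete classification of distance sets with maximum chromatic number. By the theorem of Kemnitz and Marangio \cite{KM2}, $\omega(G(D))\ge|D|$ holds precisely for $|D|\le1$ and for $D$ of the form $\{x,y,x+y\}$, $\{x,y,x+y,|y-x|\}$ or $\{x,2x,\dots,nx,y\}$; so the conjecture is equivalent to the assertion that, outside these families, the only distance sets $D$ with $\gcd(D)=1$ and $\chi(D)=|D|+1$ are $\{1,4,5,6,7\}$ and $\{1,2,\dots,2k-1,2k+1,4k\}$. Throughout write $k=|D|$ and $D=\{d_1<\dots<d_k\}$.

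\emph{Step 1: a clique lower bound.} I would first try to prove that $\chi(D)=k+1$ forces $\omega(G(D))\ge k-1$. The mechanism should be the one behind Propositions~\ref{p3}--\ref{p6} run backwards: the greedy left-to-right colouring of $\mathbb Z$ uses at most $k+1$ colours, and the last colour is genuinely needed only if for some vertex $n$ the already coloured neighbours $n-d_1,\dots,n-d_k$ are forced to receive $k$ distinct colours no matter how the earlier vertices were coloured; unwinding this forcing produces a long block of consecutive integers whose colours behave rigidly, and from rigidity one should be able to read off a clique of size $k-1$ (if none existed, perturbing the block frees a colour and contradicts $\chi(D)=k+1$). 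Formalising this, preferably via a de~Bruijn--Erd\H{o}s reduction to a finite $(k+1)$-critical subgraph, is already genuine work.

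\emph{Step 2: $D$ is nearly the difference set of a maximum clique.} Granting Step 1, and assuming $D$ is not one of the Kemnitz--Marangio families, fix a clique $A=\{0=a_1<\dots<a_{k-1}\}$ and let $\Delta(A)$ be its set of positive pairwise differences. Then $\Delta(A)\subseteq D$ and $|\Delta(A)|\ge k-2$, so $D$ enlarges $\Delta(A)$ by at most two elements, and I would split on the shape of $A$. If $A$ is an arithmetic progression then, rescaling and using $\gcd(D)=1$, $A=\{0,1,\dots,k-2\}$, hence $\{1,\dots,k-2\}\subseteq D$ and $D=\{1,\dots,k-2\}\cup\{p,q\}$ with $k\le p<q$ (the choice $p=k-1$ is impossible, as it would put $D$ in the family $\{x,2x,\dots,nx,y\}$). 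A finite analysis of the admissible pairs $(p,q)$---discarding every pair that creates a clique of size $k$ (whence $\omega\ge k$) and every pair for which a periodic pattern yields a proper $k$-colouring---should leave exactly $p=k$ and $q=2(k-1)$, i.e.\ $D=\{1,\dots,2k'-1,2k'+1,4k'\}$ with $k=2k'+1$. If $A$ is not an arithmetic progression, then $\Delta(A)$ is ``irregular'' (it cannot be an interval $\{1,\dots,k-1\}$), $A$ is typically a sumset $B+C$ and $\Delta(A)\subseteq(B-B)+(C-C)$, and $D$ is $\Delta(A)$ possibly enlarged by one element; enumerating the small non-progression cliques whose irregular difference set almost fills $D$ should leave, for $k=5$, only $A=\{0,1,5,6\}=\{0,1\}+\{0,5\}$ with $\Delta(A)=\{1,4,5,6\}$ and $D=\{1,4,5,6,7\}$.

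The main obstacle is the non-arithmetic-progression branch of Step 2: bounding the size of a clique whose irregular difference set nearly exhausts $D$, and showing that for $k\ge6$ no such configuration coexists with $\chi(D)=k+1$ unless $\omega\ge k$, is a genuine extremal classification---indeed the computer search for $4\le|D|\le11$ is essentially probing this question empirically, and even the case $|D|=4$ is open, being equivalent to the conjectured completeness of the list $\{1,2,3,4n\}$, $\{x,y,x+y,|y-x|\}$ for $\chi=5$. A secondary difficulty is uniformity in $k$ in the progression branch: the contradiction in Theorem~2 arises because a colour-propagation closes up cyclically through the interplay of the interval $\{1,\dots,2k-1\}$, the gap at $2k$, and the wrap-around distance $4k$, and one would want a single argument---perhaps the absence of a consistent periodic labelling of a ``forcing digraph'' on the colour patterns of a window of length $\max D$---that reproduces this for the family while failing for every other $D=\{1,\dots,k-2\}\cup\{p,q\}$.
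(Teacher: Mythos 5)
This statement is a \emph{conjecture} in the paper: the authors offer no proof, only a computer verification for $4\le|D|\le11$ with bounded $\max D$ and a pointer to a preprint treating one special family. Your proposal is therefore being measured against nothing, and on its own terms it is a research programme rather than a proof; you say so yourself, but the gaps deserve to be named precisely. Step 1 asserts that $\chi(D)=k+1$ forces $\omega(G(D))\ge k-1$. This is itself an open statement, not a lemma one can "formalise with some work": the fact that the greedy colouring of $\mathbb Z$ uses at most $k+1$ colours says nothing about when $k+1$ are \emph{necessary}, and $(k+1)$-chromatic graphs with tiny cliques exist in general, so any such implication must exploit the arithmetic of $D$. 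No mechanism in Propositions~\ref{p3}--\ref{p6} "runs backwards": those propositions start from the clique $\{0,1,\dots,2k-1\}$, i.e.\ they \emph{assume} the large clique whose existence Step 1 is supposed to establish. A de Bruijn--Erd\H{o}s reduction gives a finite $(k+1)$-critical subgraph but no bound on its size, so it does not convert anything into a checkable statement.

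Step 2 has the same character. Even granting a clique $A$ of size $k-1$ that is an arithmetic progression, the residual sets $D=\{1,\dots,k-2\}\cup\{p,q\}$ form an infinite two-parameter family, and "a finite analysis of the admissible pairs $(p,q)$" is not finite: deciding $\chi(D)=k+1$ for such sets is exactly the problem that is already open at $|D|=4$ (the paper notes it is unknown whether $\{1,2,3,4n\}$ and $\{x,y,x+y,|y-x|\}$ exhaust the $5$-chromatic $4$-sets). The non-progression branch is weaker still: the claim that a non-AP clique is "typically a sumset $B+C$" is unsupported, and nothing bounds $k$ in that branch so as to reduce it to the single case $k=5$. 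In short, your reformulation via Kemnitz--Marangio is correct and your diagnosis of the obstacles is accurate, but every load-bearing step is an unproved assertion at least as hard as the conjecture itself; the statement remains open, in the paper and in your proposal alike.
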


\noindent In support of this conjecture we indicate that for the sets of shape $\{1,2,\dots,k,t\cdot a\}\backslash\{a\}$, where $a\leq k <t\cdot a$, there is no other case with $\chi(D) = |D| + 1$ and $\omega(G(D)) < |D|$ (for more details see \cite{OS}).

%-------------------------------------------literatura-----------------------------------

\end{document}